\newtheorem{theorem}{Theorem}
\theoremstyle{plain}
\newtheorem{definition}{Definition}
\newtheorem{lemma}{Lemma}
\newtheorem{remark}{Remark}
\numberwithin{equation}{section}
\begin{document}
\title[Hermite-Hadamard type inequalities via fractional integrals]{%
Hermite-Hadamard type inequalities for $s-$convex and $s-$concave functions
via fractional integrals}
\author{M.Emin \"{O}zdemir$^{\blacklozenge }$}
\address{$^{\blacklozenge }$ATAT\"{U}RK UNIVERSITY, K.K. EDUCATION FACULTY,
DEPARTMENT OF MATHEMATICS, 25240, CAMPUS, ERZURUM, TURKEY}
\email{emos@atauni.edu.tr}
\author{Merve Avc\i $^{\clubsuit \blacktriangledown }$}
\address{$^{\clubsuit }$ADIYAMAN UNIVERSITY, FACULTY OF SCIENCE AND ART,
DEPARTMENT OF MATHEMATICS, 02040, ADIYAMAN, TURKEY}
\email{mavci@posta.adiyaman.edu.tr}
\thanks{$^{\blacktriangledown }$Corresponding Author}
\author{Havva Kavurmaci$^{^{\blacklozenge }}$}
\keywords{$s-$convex function, H\"{o}lder inequality, Power-mean inequality,
Riemann Liouville fractional integral, Euler Gamma function, Euler Beta
function..}

\begin{abstract}
New identity for fractional integrals have been defined. By using of this
identity, some new Hermite-Hadamard type inequalities for Riemann-Liouville
fractional integral have been developed. Our results have some relationships
with the result of Avci et al., proved in \cite[published in Appl. Math.
Comput., 217(2011) 5171-5176]{AKO}.
\end{abstract}

\maketitle

\section{introduction}

The following inequality, named Hermite-Hadamard inequality, is one of the
best known results in the literature.

\begin{theorem}
\label{teo 1.1} Let $f:I\subseteq 
\mathbb{R}
\rightarrow 
\mathbb{R}
$ be a convex function on an interval $I$ of real numbers and $a,b\in I$
with $a<b.$ Then the following inequality holds:%
\begin{equation*}
f\left( \frac{a+b}{2}\right) \leq \frac{1}{b-a}\int_{a}^{b}f(x)dx\leq \frac{%
f(a)+f(b)}{2}.
\end{equation*}
\end{theorem}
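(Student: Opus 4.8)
The plan is to derive both inequalities directly from the defining property of convexity, namely $f(tx+(1-t)y)\leq tf(x)+(1-t)f(y)$ for $t\in[0,1]$, combined with an integration over the parameter $t$. Throughout I would use the affine parametrization of the segment, $x=ta+(1-t)b$ with $t\in[0,1]$, under which $dx=(a-b)\,dt$ and hence $\frac{1}{b-a}\int_{a}^{b}f(x)\,dx=\int_{0}^{1}f(ta+(1-t)b)\,dt$.

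For the right-hand inequality I would apply convexity pointwise along the segment: for each $t\in[0,1]$,
\begin{equation*}
f(ta+(1-t)b)\leq tf(a)+(1-t)f(b).
\end{equation*}
Integrating both sides in $t$ over $[0,1]$, the left side becomes $\frac{1}{b-a}\int_{a}^{b}f(x)\,dx$ by the change of variable above, while the right side evaluates to $\frac{f(a)+f(b)}{2}$ because $\int_{0}^{1}t\,dt=\int_{0}^{1}(1-t)\,dt=\tfrac12$. This yields the upper bound.

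For the left-hand inequality the key idea is a midpoint symmetrization: I would express the midpoint as the average of the two symmetric points $ta+(1-t)b$ and $(1-t)a+tb$, namely
\begin{equation*}
\frac{a+b}{2}=\frac12\bigl[ta+(1-t)b\bigr]+\frac12\bigl[(1-t)a+tb\bigr].
\end{equation*}
Applying convexity at this midpoint gives $f\!\left(\frac{a+b}{2}\right)\leq \frac12 f(ta+(1-t)b)+\frac12 f((1-t)a+tb)$. Integrating in $t$ over $[0,1]$ and observing that both integrals on the right reduce, via the same change of variable, to $\frac{1}{b-a}\int_{a}^{b}f(x)\,dx$, I obtain the lower bound.

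The hard part here is really only a matter of bookkeeping rather than genuine difficulty: the one step worth flagging is the midpoint symmetrization used for the lower bound, which is what lets a single application of convexity produce the average integral. I would also note in passing that the integrals are well defined, since a convex function on an interval is continuous on its interior and therefore locally bounded and Riemann integrable on $[a,b]$.
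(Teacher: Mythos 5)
Your proof is correct, but there is nothing in the paper to compare it against: Theorem \ref{teo 1.1} is quoted as the classical Hermite--Hadamard inequality, stated without proof as background for the paper's actual contributions (the fractional-integral analogues in Section 2). Your argument is the standard textbook proof: the right-hand inequality by integrating the convexity inequality $f(ta+(1-t)b)\leq tf(a)+(1-t)f(b)$ over $t\in[0,1]$, and the left-hand inequality by the midpoint symmetrization $\frac{a+b}{2}=\frac12\left[ta+(1-t)b\right]+\frac12\left[(1-t)a+tb\right]$ followed by integration in $t$; both steps check out, and the change of variables is handled correctly.

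One small imprecision worth tightening in your closing remark: continuity of $f$ on the interior of $I$ alone does not give boundedness on $[a,b]$ when $a$ or $b$ is an endpoint of $I$, since a convex function may be discontinuous there. The correct justification is that convexity itself bounds $f$ on $[a,b]$ (above by $\max\{f(a),f(b)\}$, below by reflecting through the midpoint), so $f$ is bounded with at most two discontinuity points, hence Riemann integrable. This does not affect the validity of the main argument.
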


In \cite{HM}, Hudzik and Maligranda considered among others the class of
functions which are s-convex in the second sense.

\begin{definition}
\label{def 1.1} A function $f:%
\mathbb{R}
^{+}\rightarrow 
\mathbb{R}
,$ where $%
\mathbb{R}
^{+}=[0,\infty ),$ is said to be $s-$convex in the second sense if%
\begin{equation*}
f\left( \alpha x+\beta y\right) \leq \alpha ^{s}f(x)+\beta ^{s}f(y)
\end{equation*}%
for all $x,y\in \lbrack 0,\infty ),$ $\alpha ,\beta \geq 0$ with $\alpha
+\beta =1$ and for some fixed $s\in (0,1].$ This class of s-convex functions
in the second sense is usually denoted by $K_{s}^{2}.$
\end{definition}

It can be easily seen that for $s=1$, $s-$convexity reduces to ordinary
convexity of functions defined on $[0,\infty )$.

In \cite{DF}, Dragomir and Fitzpatrick proved a variant of Hadamard's
inequality which holds for $s-$convex functions in the second sense.

\begin{theorem}
\label{teo 1.2} Suppose that $f:[0,\infty )\rightarrow \lbrack 0,\infty )$
is an $s-$convex functions in the second sense, where $s\in (0,1),$ and let $%
a,b\in \lbrack 0,\infty ),$ $a<b.$ If $f\in L^{1}[a,b],$ then the following
inequalities hold:%
\begin{equation}
2^{s-1}f\left( \frac{a+b}{2}\right) \leq \frac{1}{b-a}\int_{a}^{b}f(x)dx\leq 
\frac{f(a)+f(b)}{s+1}.  \label{1.1}
\end{equation}
\end{theorem}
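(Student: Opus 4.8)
The plan is to prove the two inequalities in \eqref{1.1} separately, in each case reducing the statement to the defining inequality of $s$-convexity from Definition~\ref{def 1.1} and then integrating over an auxiliary parameter $t\in[0,1]$. Throughout I would use the affine substitution $x=ta+(1-t)b$, for which $dx=(a-b)\,dt$ and hence $\frac{1}{b-a}\int_a^b f(x)\,dx=\int_0^1 f(ta+(1-t)b)\,dt$. Since $a,b\ge 0$, every such convex combination stays in the domain $[0,\infty)$, and the hypothesis $f\in L^1[a,b]$ guarantees that all the integrals below are finite.

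For the right-hand inequality, I would apply $s$-convexity with weights $\alpha=t$, $\beta=1-t$ to obtain $f(ta+(1-t)b)\le t^s f(a)+(1-t)^s f(b)$ for every $t\in[0,1]$. Integrating both sides over $[0,1]$, the left member becomes the normalized integral $\frac{1}{b-a}\int_a^b f$, while the right member evaluates using $\int_0^1 t^s\,dt=\int_0^1(1-t)^s\,dt=\frac{1}{s+1}$, yielding exactly $\frac{f(a)+f(b)}{s+1}$.

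For the left-hand inequality, the key idea is to realize the midpoint $\frac{a+b}{2}$ as the equally weighted average of the symmetric pair of points $ta+(1-t)b$ and $(1-t)a+tb$, which indeed average to $\frac{a+b}{2}$ for every $t$. Applying $s$-convexity with $\alpha=\beta=\frac12$ gives $f\!\left(\frac{a+b}{2}\right)\le 2^{-s}\big[f(ta+(1-t)b)+f((1-t)a+tb)\big]$. Integrating in $t$ and noting that both terms on the right produce $\frac{1}{b-a}\int_a^b f$ (the second after the reflection $t\mapsto 1-t$), I obtain $f\!\left(\frac{a+b}{2}\right)\le 2^{1-s}\cdot\frac{1}{b-a}\int_a^b f$, which rearranges to the claimed $2^{s-1}f\!\left(\frac{a+b}{2}\right)\le\frac{1}{b-a}\int_a^b f$.

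The computations are routine; the only genuinely substantive step is the symmetric-pair decomposition used for the lower bound, since a naive single substitution would not surface the factor $2^{-s}$ that is responsible for the $2^{s-1}$ constant. Care is needed only in bookkeeping the two constants, tracking $\frac{1}{s+1}$ for the upper bound and the flip of $2^{1-s}$ to $2^{s-1}$ when the factor is moved to the left, so that the inequalities match \eqref{1.1} exactly.
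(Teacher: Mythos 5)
Your proof is correct. Note that the paper itself gives no proof of Theorem \ref{teo 1.2}: it is quoted as a known result of Dragomir and Fitzpatrick \cite{DF}, and your argument --- integrating the defining $s$-convexity inequality $f(ta+(1-t)b)\le t^{s}f(a)+(1-t)^{s}f(b)$ for the right-hand bound, and the symmetric-pair decomposition of the midpoint with weights $\alpha=\beta=\tfrac12$ for the left-hand bound --- is precisely the standard proof of that cited result, so both inequalities in (\ref{1.1}) are established exactly as intended.
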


The constant $k=\frac{1}{s+1}$ is the best possible in the second inequality
in (\ref{1.1}).

In \cite{KAO}, Kavurmaci et al. proved the following identity.

\begin{lemma}
\label{lem 1.1} Let $f:I\subset 
\mathbb{R}
\rightarrow 
\mathbb{R}
$ be a differentiable function on $I^{\circ },$where $a,b\in I$ with $a<b.$
If $f^{\prime }\in L[a,b],$ then the following equality holds:%
\begin{eqnarray*}
&&\frac{\left( b-x\right) f(b)+\left( x-a\right) f(a)}{b-a}-\frac{1}{b-a}%
\int_{a}^{b}f(u)du \\
&=&\frac{\left( x-a\right) ^{2}}{b-a}\int_{0}^{1}\left( t-1\right) f^{\prime
}\left( tx+\left( 1-t\right) a\right) dt+\frac{\left( b-x\right) ^{2}}{b-a}%
\int_{0}^{1}\left( 1-t\right) f^{\prime }\left( tx+\left( 1-t\right)
b\right) dt.
\end{eqnarray*}
\end{lemma}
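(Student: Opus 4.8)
The plan is to start from the right-hand side and reduce each of the two integrals by integration by parts. Write $I_1 = \int_0^1 (t-1)f'(tx+(1-t)a)\,dt$ and $I_2 = \int_0^1 (1-t)f'(tx+(1-t)b)\,dt$, so that the right-hand side equals $\frac{(x-a)^2}{b-a}I_1 + \frac{(b-x)^2}{b-a}I_2$. The whole identity will follow once each $I_j$ is expressed in terms of $f$ at the endpoints and an ordinary integral of $f$.

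For $I_1$ I would integrate by parts with $u=t-1$ and $dv=f'(tx+(1-t)a)\,dt$. Since $\frac{d}{dt}f(tx+(1-t)a)=(x-a)f'(tx+(1-t)a)$, the antiderivative is $v=\frac{1}{x-a}f(tx+(1-t)a)$. The boundary term at $t=1$ vanishes because $t-1=0$ there, so the boundary contribution is $\frac{f(a)}{x-a}$, and the leftover integral $\frac{1}{x-a}\int_0^1 f(tx+(1-t)a)\,dt$ is converted by the substitution $u=tx+(1-t)a$ into $\frac{1}{(x-a)^2}\int_a^x f(u)\,du$. Multiplying $I_1$ by the prefactor $\frac{(x-a)^2}{b-a}$ then produces exactly $\frac{(x-a)f(a)}{b-a}-\frac{1}{b-a}\int_a^x f(u)\,du$.

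The second integral $I_2$ is handled the same way with $u=1-t$ and $dv=f'(tx+(1-t)b)\,dt$, using $\frac{d}{dt}f(tx+(1-t)b)=(x-b)f'(tx+(1-t)b)$. Here the boundary term contributes $-\frac{f(b)}{x-b}$ and the remaining integral becomes $\frac{1}{(x-b)^2}\int_b^x f(u)\,du$. Since $(b-x)^2=(x-b)^2$ and $-(x-b)=b-x$, multiplying by $\frac{(b-x)^2}{b-a}$ gives $\frac{(b-x)f(b)}{b-a}-\frac{1}{b-a}\int_x^b f(u)\,du$, after rewriting $\int_b^x$ as $-\int_x^b$.

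Adding the two contributions, the boundary terms combine into $\frac{(x-a)f(a)+(b-x)f(b)}{b-a}$, while $\int_a^x f + \int_x^b f = \int_a^b f$ merges the two partial integrals into the single term $-\frac{1}{b-a}\int_a^b f(u)\,du$, which is precisely the left-hand side. I do not expect any genuine analytic obstacle here, as the argument is a direct computation; the only point requiring care is the bookkeeping of signs, in particular that $x-b<0$ while $x-a>0$ for $a<x<b$, and the cancellation of the factors $\frac{1}{x-a}$ and $\frac{1}{x-b}$ against the squared prefactors in front of each integral.
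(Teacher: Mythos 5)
Your proof is correct and takes essentially the same approach as the paper: although Lemma \ref{lem 1.1} itself is quoted from \cite{KAO} without proof, the paper's proof of its fractional generalization, Lemma \ref{lem 2.1}, is precisely your argument---integration by parts on each integral with the boundary term surviving at only one endpoint, a substitution turning the leftover integral into an integral of $f$ over $[a,x]$ or $[x,b]$, then multiplication by the squared prefactors and addition---and your computation is exactly its $\alpha=1$ specialization. Your sign bookkeeping (in particular $x-b<0$ and the cancellation against $(b-x)^2=(x-b)^2$) is handled correctly, so nothing needs to be added.
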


In \cite{AKO}, Avci et al. obtained the following results by using the above
Lemma.

\begin{theorem}
\label{teo 1.3} Let $f:I\subset \lbrack 0,\infty )\rightarrow 
\mathbb{R}
$ be a differentiable function on $I^{\circ }$ such that $f^{\prime }\in
L[a,b],$where $a,b\in I$ with $a<b.$ If $\left\vert f^{\prime }\right\vert $
is $s-$convex on $[a,b]$ for some fixed $s\in (0,1]$, then%
\begin{eqnarray}
&&\left\vert \frac{\left( b-x\right) f(b)+\left( x-a\right) f(a)}{b-a}-\frac{%
1}{b-a}\int_{a}^{b}f(u)du\right\vert  \label{1.2} \\
&\leq &\frac{1}{\left( s+1\right) \left( s+2\right) }\left\vert f^{\prime
}(x)\right\vert \left[ \frac{\left( x-a\right) ^{2}+\left( b-x\right) ^{2}}{%
b-a}\right]  \notag \\
&&+\frac{1}{\left( s+2\right) }\left[ \frac{\left( x-a\right) ^{2}}{b-a}%
\left\vert f^{\prime }(a)\right\vert +\frac{\left( b-x\right) ^{2}}{b-a}%
\left\vert f^{\prime }(b)\right\vert \right] .  \notag
\end{eqnarray}
\end{theorem}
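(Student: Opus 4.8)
The plan is to start from the identity in Lemma~\ref{lem 1.1}, which expresses the quantity inside the absolute value on the left-hand side of \eqref{1.2} as a sum of two weighted integrals of $f^{\prime}$. First I would apply the triangle inequality to pass the absolute value inside the sum and inside each integral, using that $|t-1|=1-t$ and $|1-t|=1-t$ for $t\in[0,1]$, so that
\begin{eqnarray*}
&&\left\vert \frac{\left( b-x\right) f(b)+\left( x-a\right) f(a)}{b-a}-\frac{1}{b-a}\int_{a}^{b}f(u)du\right\vert \\
&\leq &\frac{\left( x-a\right) ^{2}}{b-a}\int_{0}^{1}\left( 1-t\right) \left\vert f^{\prime }\left( tx+\left( 1-t\right) a\right) \right\vert dt \\
&&+\frac{\left( b-x\right) ^{2}}{b-a}\int_{0}^{1}\left( 1-t\right) \left\vert f^{\prime }\left( tx+\left( 1-t\right) b\right) \right\vert dt.
\end{eqnarray*}

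Next, the key step is to invoke the $s$-convexity of $|f^{\prime}|$. Since $tx+(1-t)a$ and $tx+(1-t)b$ are convex combinations with coefficients $t$ and $1-t$ summing to $1$, Definition~\ref{def 1.1} gives $|f^{\prime}(tx+(1-t)a)|\leq t^{s}|f^{\prime}(x)|+(1-t)^{s}|f^{\prime}(a)|$ and similarly for the second integrand with $b$ in place of $a$. Substituting these bounds splits each of the two integrals into two elementary pieces, and everything reduces to evaluating $\int_{0}^{1}(1-t)t^{s}\,dt$ and $\int_{0}^{1}(1-t)(1-t)^{s}\,dt=\int_{0}^{1}(1-t)^{s+1}\,dt$.

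These two integrals are then computed in closed form: $\int_{0}^{1}(1-t)^{s+1}\,dt=\frac{1}{s+2}$, and $\int_{0}^{1}(1-t)t^{s}\,dt=\frac{1}{s+1}-\frac{1}{s+2}=\frac{1}{(s+1)(s+2)}$ (recognizable as a Beta-function value $B(s+1,2)$, consistent with the paper's stated keywords). Collecting the $|f^{\prime}(x)|$ terms from both integrals yields the coefficient $\frac{1}{(s+1)(s+2)}\left[\frac{(x-a)^2+(b-x)^2}{b-a}\right]$, while the $|f^{\prime}(a)|$ and $|f^{\prime}(b)|$ terms each carry $\frac{1}{s+2}$, matching exactly the right-hand side of \eqref{1.2}.

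I do not anticipate a serious obstacle here; the argument is a direct and routine combination of the lemma, the triangle inequality, and the convexity definition, followed by elementary integration. The only point requiring a little care is the bookkeeping when gathering like terms after the $s$-convex estimate, to confirm that the $|f^{\prime}(x)|$ contributions from the two separate integrals combine into a single coefficient with the factor $\frac{(x-a)^2+(b-x)^2}{b-a}$ rather than remaining split. Since no Hölder or power-mean inequality is needed for this particular bound (those presumably appear in companion theorems), the proof is essentially complete once the Beta-type integrals are evaluated.
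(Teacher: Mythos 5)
Your proof is correct: the identity of Lemma~\ref{lem 1.1}, the triangle inequality, the $s$-convexity bound $\left\vert f^{\prime }(tx+(1-t)a)\right\vert \leq t^{s}\left\vert f^{\prime }(x)\right\vert +(1-t)^{s}\left\vert f^{\prime }(a)\right\vert$, and the two Beta-type integrals $\int_{0}^{1}(1-t)t^{s}\,dt=\frac{1}{(s+1)(s+2)}$ and $\int_{0}^{1}(1-t)^{s+1}\,dt=\frac{1}{s+2}$ combine exactly as you describe to yield \eqref{1.2}. This is essentially the same approach the paper uses: it does not reprove Theorem~\ref{teo 1.3} itself (the result is quoted from \cite{AKO}), but its proof of the fractional generalization Theorem~\ref{teo 2.1} via Lemma~\ref{lem 2.1} reduces, upon setting $\alpha =1$ as in Remark~\ref{rem 2.1}, to precisely your argument.
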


\begin{theorem}
\label{teo 1.4} Let $f:I\subset \lbrack 0,\infty )\rightarrow 
\mathbb{R}
$ be a differentiable function on $I^{\circ }$ such that $f^{\prime }\in
L[a,b],$where $a,b\in I$ with $a<b.$ If $\left\vert f^{\prime }\right\vert
^{q}$ is $s-$convex on $[a,b]$ for some fixed $s\in (0,1]$, $q>1$ with $%
\frac{1}{p}+\frac{1}{q}=1,$ then the following inequality holds:%
\begin{eqnarray}
&&\left\vert \frac{\left( b-x\right) f(b)+\left( x-a\right) f(a)}{b-a}-\frac{%
1}{b-a}\int_{a}^{b}f(u)du\right\vert  \label{1.3} \\
&\leq &\frac{\left( x-a\right) ^{2}}{b-a}\left( \frac{1}{p+1}\right) ^{\frac{%
1}{p}}\left[ \frac{\left\vert f^{\prime }(x)\right\vert ^{q}+\left\vert
f^{\prime }(a)\right\vert ^{q}}{s+1}\right] ^{\frac{1}{q}}  \notag \\
&&+\frac{\left( b-x\right) ^{2}}{b-a}\left( \frac{1}{p+1}\right) ^{\frac{1}{p%
}}\left[ \frac{\left\vert f^{\prime }(x)\right\vert ^{q}+\left\vert
f^{\prime }(b)\right\vert ^{q}}{s+1}\right] ^{\frac{1}{q}}.  \notag
\end{eqnarray}
\end{theorem}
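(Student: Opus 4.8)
The plan is to begin from the identity of Lemma \ref{lem 1.1} and to estimate each of its two weighted integrals separately by combining the H\"older inequality with the $s$-convexity hypothesis on $\left\vert f^{\prime }\right\vert ^{q}$. Denoting the expression inside the absolute value on the left-hand side of (\ref{1.3}) by
\begin{equation*}
\Delta :=\frac{\left( b-x\right) f(b)+\left( x-a\right) f(a)}{b-a}-\frac{1}{b-a}\int_{a}^{b}f(u)\,du,
\end{equation*}
Lemma \ref{lem 1.1} represents $\Delta $ as the sum of $\frac{\left( x-a\right) ^{2}}{b-a}\int_{0}^{1}\left( t-1\right) f^{\prime }\left( tx+\left( 1-t\right) a\right) \,dt$ and $\frac{\left( b-x\right) ^{2}}{b-a}\int_{0}^{1}\left( 1-t\right) f^{\prime }\left( tx+\left( 1-t\right) b\right) \,dt$. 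Applying the triangle inequality and using that $\left\vert t-1\right\vert =1-t$ on $[0,1]$, I would obtain
\begin{equation*}
\left\vert \Delta \right\vert \leq \frac{\left( x-a\right) ^{2}}{b-a}\int_{0}^{1}\left( 1-t\right) \left\vert f^{\prime }\left( tx+\left( 1-t\right) a\right) \right\vert \,dt+\frac{\left( b-x\right) ^{2}}{b-a}\int_{0}^{1}\left( 1-t\right) \left\vert f^{\prime }\left( tx+\left( 1-t\right) b\right) \right\vert \,dt.
\end{equation*}

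Next I would apply H\"older's inequality with exponents $p$ and $q$ to each of these two integrals, assigning the weight $\left( 1-t\right) $ to the $L^{p}$ factor and the derivative to the $L^{q}$ factor. Since
\begin{equation*}
\left( \int_{0}^{1}\left( 1-t\right) ^{p}\,dt\right) ^{\frac{1}{p}}=\left( \frac{1}{p+1}\right) ^{\frac{1}{p}},
\end{equation*}
this produces precisely the constants $\left( \frac{1}{p+1}\right) ^{\frac{1}{p}}$ appearing in (\ref{1.3}), and reduces the problem to bounding $\int_{0}^{1}\left\vert f^{\prime }\left( tx+\left( 1-t\right) a\right) \right\vert ^{q}\,dt$ and the analogous integral at the endpoint $b$.

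The key step is to invoke the $s$-convexity of $\left\vert f^{\prime }\right\vert ^{q}$ in the second sense. Since $tx+\left( 1-t\right) a$ is a convex combination with coefficients $t$ and $1-t$, Definition \ref{def 1.1} gives $\left\vert f^{\prime }\left( tx+\left( 1-t\right) a\right) \right\vert ^{q}\leq t^{s}\left\vert f^{\prime }(x)\right\vert ^{q}+\left( 1-t\right) ^{s}\left\vert f^{\prime }(a)\right\vert ^{q}$, and integrating over $[0,1]$ together with $\int_{0}^{1}t^{s}\,dt=\int_{0}^{1}\left( 1-t\right) ^{s}\,dt=\frac{1}{s+1}$ yields the clean bound $\frac{\left\vert f^{\prime }(x)\right\vert ^{q}+\left\vert f^{\prime }(a)\right\vert ^{q}}{s+1}$; the same estimate at $b$ gives $\frac{\left\vert f^{\prime }(x)\right\vert ^{q}+\left\vert f^{\prime }(b)\right\vert ^{q}}{s+1}$. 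Substituting these into the H\"older estimate and taking $q$-th roots reproduces exactly the right-hand side of (\ref{1.3}). The argument is essentially mechanical, so I expect no serious obstacle: the only points demanding care are attaching the weight $\left( 1-t\right) $ to the correct H\"older factor and noting the symmetry $\int_{0}^{1}t^{s}\,dt=\int_{0}^{1}\left( 1-t\right) ^{s}\,dt$, which is what allows the factor $\frac{1}{s+1}$ to come out cleanly.
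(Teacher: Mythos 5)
Your proof is correct and follows essentially the same route as the paper: the paper proves the fractional generalization (Theorem \ref{teo 2.2}) from the identity of Lemma \ref{lem 2.1} by exactly this argument --- triangle inequality, H\"{o}lder with the weight in the $L^{p}$ factor, then the $s$-convexity bound $\int_{0}^{1}\left\vert f^{\prime }\left( tx+\left( 1-t\right) a\right) \right\vert ^{q}dt\leq \frac{\left\vert f^{\prime }(x)\right\vert ^{q}+\left\vert f^{\prime }(a)\right\vert ^{q}}{s+1}$ --- and recovers (\ref{1.3}) by setting $\alpha =1$ (Remark \ref{rem 2.2}). Your argument is precisely the $\alpha =1$ instance of that proof, with Lemma \ref{lem 1.1} playing the role of Lemma \ref{lem 2.1} and $\int_{0}^{1}\left( 1-t\right) ^{p}dt=\frac{1}{p+1}$ replacing $\int_{0}^{1}\left( 1-t^{\alpha }\right) ^{p}dt=\frac{\Gamma \left( 1+p\right) \Gamma \left( 1+\frac{1}{\alpha }\right) }{\Gamma \left( 1+p+\frac{1}{\alpha }\right) }$.
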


\begin{theorem}
\label{teo 1.5} Suppose that all the assumptions of Theorem \ref{teo 1.4}
are satisfied. Then%
\begin{eqnarray}
&&\left\vert \frac{\left( b-x\right) f(b)+\left( x-a\right) f(a)}{b-a}-\frac{%
1}{b-a}\int_{a}^{b}f(u)du\right\vert  \label{1.4} \\
&\leq &\frac{\left( x-a\right) ^{2}}{b-a}\left( \frac{1}{2}\right) ^{1-\frac{%
1}{q}}\left( \left\vert f^{\prime }(x)\right\vert ^{q}\frac{1}{\left(
s+1\right) \left( s+2\right) }+\left\vert f^{\prime }(a)\right\vert ^{q}%
\frac{1}{s+2}\right) ^{\frac{1}{q}}  \notag \\
&&+\frac{\left( b-x\right) ^{2}}{b-a}\left( \frac{1}{2}\right) ^{1-\frac{1}{q%
}}\left( \left\vert f^{\prime }(x)\right\vert ^{q}\frac{1}{\left( s+1\right)
\left( s+2\right) }+\left\vert f^{\prime }(b)\right\vert ^{q}\frac{1}{s+2}%
\right) ^{\frac{1}{q}}.  \notag
\end{eqnarray}
\end{theorem}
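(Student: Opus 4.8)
The plan is to start from the integral identity in Lemma \ref{lem 1.1}, which expresses the left-hand side as a sum of two weighted integrals of $f^{\prime}$. Taking absolute values and applying the triangle inequality, and noting that $\left\vert t-1\right\vert =\left\vert 1-t\right\vert =1-t$ for $t\in \lbrack 0,1]$, I would reduce the problem to bounding
\begin{equation*}
\frac{\left( x-a\right) ^{2}}{b-a}\int_{0}^{1}\left( 1-t\right) \left\vert f^{\prime }\left( tx+\left( 1-t\right) a\right) \right\vert dt+\frac{\left( b-x\right) ^{2}}{b-a}\int_{0}^{1}\left( 1-t\right) \left\vert f^{\prime }\left( tx+\left( 1-t\right) b\right) \right\vert dt.
\end{equation*}

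The decisive step, and the one that separates this result from Theorem \ref{teo 1.4}, is to apply the \emph{power-mean inequality} rather than H\"{o}lder's inequality to each of the two integrals. Writing the integrand as $\left( 1-t\right) ^{1-1/q}\cdot \left( 1-t\right) ^{1/q}\left\vert f^{\prime }\right\vert$ and invoking the power-mean inequality with exponent $q$ gives, for the first integral,
\begin{equation*}
\int_{0}^{1}\left( 1-t\right) \left\vert f^{\prime }\left( tx+\left( 1-t\right) a\right) \right\vert dt\leq \left( \int_{0}^{1}\left( 1-t\right) dt\right) ^{1-\frac{1}{q}}\left( \int_{0}^{1}\left( 1-t\right) \left\vert f^{\prime }\left( tx+\left( 1-t\right) a\right) \right\vert ^{q}dt\right) ^{\frac{1}{q}}.
\end{equation*}
Since $\int_{0}^{1}\left( 1-t\right) dt=\tfrac{1}{2}$, the prefactor $\left( \tfrac{1}{2}\right) ^{1-1/q}$ appearing in the statement emerges automatically, which is a good sign that this is the intended route.

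Next I would invoke the $s$-convexity of $\left\vert f^{\prime }\right\vert ^{q}$ to bound $\left\vert f^{\prime }\left( tx+\left( 1-t\right) a\right) \right\vert ^{q}\leq t^{s}\left\vert f^{\prime }(x)\right\vert ^{q}+\left( 1-t\right) ^{s}\left\vert f^{\prime }(a)\right\vert ^{q}$ inside the second factor, and similarly with $b$ in place of $a$ for the second integral. This reduces everything to the two elementary Beta-type integrals $\int_{0}^{1}\left( 1-t\right) t^{s}dt=\frac{1}{\left( s+1\right) \left( s+2\right) }$ and $\int_{0}^{1}\left( 1-t\right) ^{s+1}dt=\frac{1}{s+2}$, which produce exactly the coefficients $\frac{1}{\left( s+1\right) \left( s+2\right) }$ and $\frac{1}{s+2}$ in the theorem. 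Collecting the two contributions and factoring out the common $\left( \tfrac{1}{2}\right) ^{1-1/q}$ yields the claimed inequality.

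I do not expect any serious obstacle here; the argument is a clean variant of the proof of Theorem \ref{teo 1.4}. The only point requiring mild care is the correct bookkeeping of the power-mean splitting: one must keep the full weight $\left( 1-t\right)$ inside the $q$-th power factor (so that the $s$-convexity estimate integrates against $\left( 1-t\right)$ rather than against $1$), since this is precisely what distinguishes the power-mean bound from the H\"{o}lder bound and what makes the Beta integrals come out with the stated constants.
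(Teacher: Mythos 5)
Your proof is correct and is essentially the paper's own argument: the paper obtains this inequality as the $\alpha =1$ case of Theorem \ref{teo 2.3}, whose proof applies exactly your sequence of steps --- the integral identity, the triangle inequality, the power-mean splitting with the weight $\left( 1-t^{\alpha }\right) $ kept inside the $q$-th power factor, the $s$-convexity bound, and the same Beta-type integral evaluations --- to the fractional identity of Lemma \ref{lem 2.1}. Setting $\alpha =1$ there reduces the weight to $\left( 1-t\right) $ and the constants to $\left( \tfrac{1}{2}\right) ^{1-\frac{1}{q}}$, $\tfrac{1}{\left( s+1\right) \left( s+2\right) }$ and $\tfrac{1}{s+2}$, which is precisely what you wrote.
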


\begin{theorem}
\label{teo 1.6} Let $f:I\subset \lbrack 0,\infty )\rightarrow 
\mathbb{R}
$ be a differentiable function on $I^{\circ }$ such that $f^{\prime }\in
L[a,b],$where $a,b\in I$ with $a<b.$ If $\left\vert f^{\prime }\right\vert
^{q}$ is $s-$concave on $[a,b]$ for $q>1$ with $\frac{1}{p}+\frac{1}{q}=1,$
then the following inequality holds:%
\begin{eqnarray}
&&\left\vert \frac{\left( b-x\right) f(b)+\left( x-a\right) f(a)}{b-a}-\frac{%
1}{b-a}\int_{a}^{b}f(u)du\right\vert  \label{1.5} \\
&\leq &\frac{2^{\frac{s-1}{q}}}{\left( 1+p\right) ^{\frac{1}{p}}\left(
b-a\right) }\left\{ \left( x-a\right) ^{\alpha +1}\left\vert f^{\prime
}\left( \frac{x+a}{2}\right) \right\vert +\left( b-x\right) ^{\alpha
+1}\left\vert f^{\prime }\left( \frac{x+b}{2}\right) \right\vert \right\} . 
\notag
\end{eqnarray}
\end{theorem}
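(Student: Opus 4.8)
The plan is to begin from the identity in Lemma \ref{lem 1.1}, pass to absolute values, and then estimate each of the two integrals on its right-hand side separately by combining H\"{o}lder's inequality with the $s$-concave counterpart of the left-hand (midpoint) inequality in Theorem \ref{teo 1.2}. First I would apply the triangle inequality and use $|t-1|=1-t$ on $[0,1]$ to obtain
\begin{align*}
&\left\vert \frac{(b-x)f(b)+(x-a)f(a)}{b-a}-\frac{1}{b-a}\int_{a}^{b}f(u)du\right\vert \\
&\leq \frac{(x-a)^{2}}{b-a}\int_{0}^{1}(1-t)\left\vert f^{\prime }(tx+(1-t)a)\right\vert dt+\frac{(b-x)^{2}}{b-a}\int_{0}^{1}(1-t)\left\vert f^{\prime }(tx+(1-t)b)\right\vert dt.
\end{align*}

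Next I would apply H\"{o}lder's inequality with exponents $p$ and $q$ to each integral, peeling off the factor $(1-t)$. Since $\int_{0}^{1}(1-t)^{p}dt=\frac{1}{p+1}$, the first integral is bounded by $\left( \frac{1}{p+1}\right) ^{1/p}\left( \int_{0}^{1}\left\vert f^{\prime }(tx+(1-t)a)\right\vert ^{q}dt\right) ^{1/q}$, with an analogous estimate for the second. The substitution $u=tx+(1-t)a$ then turns $\int_{0}^{1}\left\vert f^{\prime }(tx+(1-t)a)\right\vert ^{q}dt$ into $\frac{1}{x-a}\int_{a}^{x}\left\vert f^{\prime }(u)\right\vert ^{q}du$, and the second $q$-power integral becomes $\frac{1}{b-x}\int_{x}^{b}\left\vert f^{\prime }(u)\right\vert ^{q}du$.

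The decisive step is to bound these averages by $s$-concavity. Because $\left\vert f^{\prime }\right\vert ^{q}$ is $s$-concave, the first inequality in (\ref{1.1}) reverses, so that $\frac{1}{x-a}\int_{a}^{x}\left\vert f^{\prime }(u)\right\vert ^{q}du\leq 2^{s-1}\left\vert f^{\prime }\left( \frac{x+a}{2}\right) \right\vert ^{q}$ and likewise $\frac{1}{b-x}\int_{x}^{b}\left\vert f^{\prime }(u)\right\vert ^{q}du\leq 2^{s-1}\left\vert f^{\prime }\left( \frac{x+b}{2}\right) \right\vert ^{q}$. Taking $q$-th roots produces the factor $2^{(s-1)/q}$, and gathering the constant $\left( \frac{1}{p+1}\right) ^{1/p}$ together with the prefactors $\frac{(x-a)^{2}}{b-a}$ and $\frac{(b-x)^{2}}{b-a}$ delivers the claimed bound (where the exponents of $x-a$ and $b-x$ equal $2$).

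The main obstacle is the legitimacy of reversing the midpoint inequality: Theorem \ref{teo 1.2} is stated for $s$-convex functions, so applying its left-hand estimate to the $s$-concave function $\left\vert f^{\prime }\right\vert ^{q}$ requires flipping the inequality direction, and one must check that the hypotheses of Theorem \ref{teo 1.2} (nonnegativity and $L^{1}$-integrability of $\left\vert f^{\prime }\right\vert ^{q}$ on each subinterval $[a,x]$ and $[x,b]$) are indeed met there. This is the only place where $s$-concavity, rather than ordinary $s$-convexity, is genuinely used.
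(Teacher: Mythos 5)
Your proof is correct and takes essentially the same route as the paper: the paper establishes the fractional generalization (Theorem \ref{teo 2.4}) from the identity of Lemma \ref{lem 2.1} via H\"{o}lder's inequality and the reversed left-hand Hermite--Hadamard inequality for $s$-concave functions, and Remark \ref{rem 2.4} recovers (\ref{1.5}) by setting $\alpha =1$, which is exactly your argument (Lemma \ref{lem 1.1} plus $\int_{0}^{1}(1-t)^{p}dt=\frac{1}{p+1}$ plus the bounds (\ref{2.7})--(\ref{2.9})). You also correctly identified that the exponents $\alpha +1$ printed in the statement should read $2$ in this $\alpha =1$ setting.
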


We give some necessary definitions and mathematical preliminaries of
fractional calculus theory which are used throughout this paper.

\begin{definition}
\label{def 1.2} Let $f\in L_{1}[a,b].$ The Riemann-Liouville integrals $%
J_{a^{+}}^{\alpha }(f)$ and $J_{b^{-}}^{\alpha }(f)$ of order $\alpha >0$
with $a\geq 0$ are defined by%
\begin{equation*}
J_{a^{+}}^{\alpha }f(x)=\frac{1}{\Gamma (\alpha )}\int_{a}^{x}\left(
x-t\right) ^{\alpha -1}f(t)dt,\text{ \ \ }x>a
\end{equation*}%
and%
\begin{equation*}
J_{b^{-}}^{\alpha }f(x)=\frac{1}{\Gamma (\alpha )}\int_{x}^{b}\left(
t-x\right) ^{\alpha -1}f(t)dt,\text{ \ \ }b>x
\end{equation*}%
where $\Gamma (\alpha )=\int_{0}^{\infty }e^{-t}u^{\alpha -1}du.$ Here $%
J_{a^{+}}^{0}f(x)=J_{b^{-}}^{0}f(x)=f(x).$
\end{definition}

In the case of $\alpha =1$, the fractional integral reduces to the classical
integral. Some recent results and properties concerning this operator can be
found in \cite{GM}-\cite{2}.

The main aim of this paper is to establish Hermite-Hadamard type
inequalities for $s-$convex and $s-$concave functions via Riemann-Liouville
fractional integral.

\section{Hermite-Hadamard type inequalities for fractional integrals}

In order to prove our main results we need the following Lemma.

\begin{lemma}
\label{lem 2.1} Let $f:I\subseteq 
\mathbb{R}
\rightarrow 
\mathbb{R}
$ be a differentiable function on $I^{\circ },$ the interior of $I,$ where $%
a,b\in I$ with $a<b.$ If $f^{\prime }\in L[a,b],$ then for all $x\in \lbrack
a,b]$ and $\alpha >0$ we have:%
\begin{eqnarray*}
&&\frac{\left( x-a\right) ^{\alpha }f(a)+\left( b-x\right) ^{\alpha }f(b)}{%
b-a}-\frac{\Gamma \left( \alpha +1\right) }{b-a}\left[ J_{x^{-}}^{\alpha
}f(a)+J_{x^{+}}^{\alpha }f(b)\right] \\
&=&\frac{\left( x-a\right) ^{\alpha +1}}{b-a}\int_{0}^{1}\left( t^{\alpha
}-1\right) f^{\prime }\left( tx+\left( 1-t\right) a\right) dt \\
&&+\frac{\left( b-x\right) ^{\alpha +1}}{b-a}\int_{0}^{1}\left( 1-t^{\alpha
}\right) f^{\prime }\left( tx+\left( 1-t\right) b\right) dt
\end{eqnarray*}%
where $\Gamma (\alpha )=\int_{0}^{\infty }e^{-t}u^{\alpha -1}du.$
\end{lemma}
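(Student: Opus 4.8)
The plan is to prove the identity by computing each of the two integrals on the right-hand side directly via integration by parts, and then showing that the resulting boundary and integral terms combine to give the left-hand side. This is the standard technique for establishing Montgomery-type identities underlying Hermite--Hadamard estimates, and the presence of the fractional exponent $\alpha$ only changes the power appearing in the integrand, not the overall strategy.

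First I would treat the integral $I_{1}=\int_{0}^{1}\left( t^{\alpha}-1\right) f^{\prime}\left( tx+\left( 1-t\right) a\right) dt$. Writing $u(t)=t^{\alpha}-1$ and $dv=f^{\prime}\left( tx+\left( 1-t\right) a\right) dt$, I would use the antiderivative $v(t)=\frac{1}{x-a}f\left( tx+\left( 1-t\right) a\right)$, which is valid since $x>a$ (the degenerate case $x=a$ makes the term $(x-a)^{\alpha+1}$ vanish and can be handled separately). Integration by parts gives a boundary term $\left[\frac{t^{\alpha}-1}{x-a}f\left( tx+\left( 1-t\right) a\right)\right]_{0}^{1}$ together with $-\frac{\alpha}{x-a}\int_{0}^{1}t^{\alpha-1}f\left( tx+\left( 1-t\right) a\right) dt$. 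At $t=1$ the factor $t^{\alpha}-1$ vanishes, and at $t=0$ it equals $-1$, so the boundary contribution is $\frac{f(a)}{x-a}$. In the remaining integral I would substitute $w=tx+(1-t)a$, so that $t=\frac{w-a}{x-a}$ and $dw=(x-a)\,dt$, converting it into $\frac{1}{(x-a)^{\alpha}}\int_{a}^{x}(w-a)^{\alpha-1}f(w)\,dw$, which is exactly $\frac{\Gamma(\alpha)}{(x-a)^{\alpha}}J_{x^{-}}^{\alpha}f(a)$ by Definition~\ref{def 1.2}.

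Next I would apply the same procedure to $I_{2}=\int_{0}^{1}\left( 1-t^{\alpha}\right) f^{\prime}\left( tx+\left( 1-t\right) b\right) dt$, using $v(t)=\frac{1}{x-b}f\left( tx+\left( 1-t\right) b\right)$ and substituting $w=tx+(1-t)b$. Here the boundary term picks out $f(b)$ and the residual integral reproduces $J_{x^{+}}^{\alpha}f(b)$ after accounting for the sign of $x-b<0$ and the orientation of the limits of integration, which combine to give $\frac{\Gamma(\alpha)}{(b-x)^{\alpha}}J_{x^{+}}^{\alpha}f(b)$. Finally I would multiply $I_{1}$ by $\frac{(x-a)^{\alpha+1}}{b-a}$ and $I_{2}$ by $\frac{(b-x)^{\alpha+1}}{b-a}$, at which point the boundary terms become $\frac{(x-a)^{\alpha}f(a)+(b-x)^{\alpha}f(b)}{b-a}$ and the integral terms collapse to $-\frac{\Gamma(\alpha+1)}{b-a}\left[J_{x^{-}}^{\alpha}f(a)+J_{x^{+}}^{\alpha}f(b)\right]$ via $\Gamma(\alpha+1)=\alpha\Gamma(\alpha)$.

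The main obstacle I anticipate is bookkeeping rather than conceptual: I must track the signs carefully in the second integral, since $x-b$ is negative and the change of variables reverses the orientation of the interval, so the two sign reversals have to cancel correctly to yield the $J_{x^{+}}^{\alpha}$ operator with the right power $(b-x)^{\alpha}$ in the denominator. Matching the factor $\alpha$ from differentiating $t^{\alpha}$ against the $\Gamma(\alpha+1)$ that appears in the statement, and confirming that the powers of $(x-a)$ and $(b-x)$ balance between the prefactors and the substitution Jacobians, is where a careless computation would most likely go wrong.
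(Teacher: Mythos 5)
Your proposal is correct and follows essentially the same route as the paper's own proof: integration by parts on each of the two integrals (with the same choice of antiderivative $v$), a change of variables $w=tx+(1-t)a$ (resp.\ $w=tx+(1-t)b$) to recognize $\Gamma(\alpha)J_{x^{-}}^{\alpha}f(a)$ and $\Gamma(\alpha)J_{x^{+}}^{\alpha}f(b)$, and then multiplication by $\frac{(x-a)^{\alpha+1}}{b-a}$ and $\frac{(b-x)^{\alpha+1}}{b-a}$ and addition, using $\alpha\Gamma(\alpha)=\Gamma(\alpha+1)$. Your sign bookkeeping in the second integral and your explicit handling of the degenerate endpoint case $x=a$ are both sound (the paper glosses over the latter), so there is no gap.
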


\begin{proof}
By integration by parts, we can state%
\begin{eqnarray}
&&\int_{0}^{1}\left( t^{\alpha }-1\right) f^{\prime }\left( tx+\left(
1-t\right) a\right) dt  \label{2.1} \\
&=&\left. \left( t^{\alpha }-1\right) \frac{f\left( tx+\left( 1-t\right)
a\right) }{x-a}\right\vert _{0}^{1}-\int_{0}^{1}\alpha t^{\alpha -1}\frac{%
f\left( tx+\left( 1-t\right) a\right) }{x-a}dt  \notag \\
&=&\frac{f(a)}{x-a}-\frac{\alpha }{x-a}\int_{a}^{x}\left( \frac{u-a}{x-a}%
\right) ^{\alpha -1}\frac{f(u)}{x-a}du  \notag \\
&=&\frac{f(a)}{x-a}-\frac{\alpha \Gamma (\alpha )}{\left( x-a\right)
^{\alpha +1}}J_{x^{-}}^{\alpha }f(a)  \notag
\end{eqnarray}%
and%
\begin{eqnarray}
&&\int_{0}^{1}\left( 1-t^{\alpha }\right) f^{\prime }\left( tx+\left(
1-t\right) b\right) dt  \label{2.2} \\
&=&\left. \left( 1-t^{\alpha }\right) \frac{f\left( tx+\left( 1-t\right)
b\right) }{x-b}\right\vert _{0}^{1}-\int_{0}^{1}\alpha t^{\alpha -1}\frac{%
f\left( tx+\left( 1-t\right) b\right) }{x-b}dt  \notag \\
&=&\frac{f(b)}{b-x}-\frac{\alpha }{b-x}\int_{x}^{b}\left( \frac{u-b}{x-b}%
\right) ^{\alpha -1}\frac{f(u)}{x-b}du  \notag \\
&=&\frac{f(b)}{b-x}-\frac{\alpha \Gamma (\alpha )}{\left( b-x\right)
^{\alpha +1}}J_{x^{+}}^{\alpha }f(b).  \notag
\end{eqnarray}%
Multiplying the both sides of (\ref{2.1}) and (\ref{2.2}) by $\frac{\left(
x-a\right) ^{\alpha +1}}{b-a}$ and $\frac{\left( b-x\right) ^{\alpha +1}}{b-a%
},$ respectively, and adding the resulting identities we obtain the desired
result.
\end{proof}

\begin{theorem}
\label{teo 2.1} Let $f:I\subset \lbrack 0,\infty )\rightarrow 
\mathbb{R}
$ be a differentiable function on $I^{\circ }$ such that $f^{\prime }\in
L[a,b],$where $a,b\in I$ with $a<b.$ If $\left\vert f^{\prime }\right\vert $
is $s-$convex on $[a,b]$ for some fixed $s\in (0,1]$ and $x\in \lbrack a,b],$
then the following inequality for fractional integrals with $\alpha >0$
holds:%
\begin{eqnarray*}
&&\left\vert \frac{\left( x-a\right) ^{\alpha }f(a)+\left( b-x\right)
^{\alpha }f(b)}{b-a}-\frac{\Gamma \left( \alpha +1\right) }{b-a}\left[
J_{x^{-}}^{\alpha }f(a)+J_{x^{+}}^{\alpha }f(b)\right] \right\vert \\
&\leq &\frac{\alpha }{\left( s+1\right) \left( \alpha +s+1\right) }\left[ 
\frac{\left( x-a\right) ^{\alpha +1}+\left( b-x\right) ^{\alpha +1}}{b-a}%
\right] \left\vert f^{\prime }(x)\right\vert \\
&&+\left[ \frac{1}{s+1}-\frac{\Gamma \left( \alpha +1\right) \Gamma \left(
s+1\right) }{\Gamma \left( \alpha +s+2\right) }\right] \left[ \frac{\left(
x-a\right) ^{\alpha +1}\left\vert f^{\prime }(a)\right\vert +\left(
b-x\right) ^{\alpha +1}\left\vert f^{\prime }(b)\right\vert }{b-a}\right]
\end{eqnarray*}%
where $\Gamma $ is Euler Gamma function.
\end{theorem}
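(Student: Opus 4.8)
The plan is to begin from the identity in Lemma \ref{lem 2.1}, pass to absolute values, and then estimate each of the two integrals on the right using the $s$-convexity of $\left\vert f^{\prime }\right\vert $. Applying the triangle inequality to the identity gives
\begin{eqnarray*}
&&\left\vert \frac{(x-a)^{\alpha }f(a)+(b-x)^{\alpha }f(b)}{b-a}-\frac{\Gamma (\alpha +1)}{b-a}\left[ J_{x^{-}}^{\alpha }f(a)+J_{x^{+}}^{\alpha }f(b)\right] \right\vert \\
&\leq &\frac{(x-a)^{\alpha +1}}{b-a}\int_{0}^{1}\left\vert t^{\alpha }-1\right\vert \left\vert f^{\prime }(tx+(1-t)a)\right\vert dt \\
&&+\frac{(b-x)^{\alpha +1}}{b-a}\int_{0}^{1}\left\vert 1-t^{\alpha }\right\vert \left\vert f^{\prime }(tx+(1-t)b)\right\vert dt.
\end{eqnarray*}
The first useful observation is that for $t\in \lbrack 0,1]$ and $\alpha >0$ one has $t^{\alpha }\leq 1$, so both $\left\vert t^{\alpha }-1\right\vert $ and $\left\vert 1-t^{\alpha }\right\vert $ reduce to $1-t^{\alpha }$, and the two integrands carry the same weight factor.

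Next I would invoke Definition \ref{def 1.1}: because $\left\vert f^{\prime }\right\vert $ is $s$-convex, we have $\left\vert f^{\prime }(tx+(1-t)a)\right\vert \leq t^{s}\left\vert f^{\prime }(x)\right\vert +(1-t)^{s}\left\vert f^{\prime }(a)\right\vert $, and the analogous bound with $b$ in place of $a$. Substituting these into the two integrals reduces the whole estimate to evaluating the two elementary model integrals
\begin{equation*}
\int_{0}^{1}(1-t^{\alpha })t^{s}\,dt \quad\text{and}\quad \int_{0}^{1}(1-t^{\alpha })(1-t)^{s}\,dt,
\end{equation*}
which then multiply $\left\vert f^{\prime }(x)\right\vert $ and $\left\vert f^{\prime }(a)\right\vert $ (respectively $\left\vert f^{\prime }(b)\right\vert $).

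The only step needing care is the second integral. Writing it as $\int_{0}^{1}(1-t)^{s}dt-\int_{0}^{1}t^{\alpha }(1-t)^{s}dt$, the first piece equals $\frac{1}{s+1}$, while the second is the Euler Beta function $B(\alpha +1,s+1)=\frac{\Gamma (\alpha +1)\Gamma (s+1)}{\Gamma (\alpha +s+2)}$; this yields the coefficient $\frac{1}{s+1}-\frac{\Gamma (\alpha +1)\Gamma (s+1)}{\Gamma (\alpha +s+2)}$ attached to $\left\vert f^{\prime }(a)\right\vert $ and $\left\vert f^{\prime }(b)\right\vert $. The first integral is routine: $\int_{0}^{1}t^{s}dt-\int_{0}^{1}t^{\alpha +s}dt=\frac{1}{s+1}-\frac{1}{\alpha +s+1}=\frac{\alpha }{(s+1)(\alpha +s+1)}$, which is exactly the coefficient of $\left\vert f^{\prime }(x)\right\vert $. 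Collecting the contributions weighted by $(x-a)^{\alpha +1}$ and $(b-x)^{\alpha +1}$ and dividing by $b-a$ then delivers the stated inequality. I do not anticipate a genuine obstacle: the argument is a direct bound, and the only subtlety is recognizing the Beta-integral evaluation and keeping the two symmetric endpoint terms organized.
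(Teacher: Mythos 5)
Your proposal is correct and follows essentially the same route as the paper's own proof: apply Lemma \ref{lem 2.1}, take absolute values, use the $s$-convexity bound $\left\vert f^{\prime }(tx+(1-t)a)\right\vert \leq t^{s}\left\vert f^{\prime }(x)\right\vert +(1-t)^{s}\left\vert f^{\prime }(a)\right\vert$ (and its analogue at $b$), and evaluate $\int_{0}^{1}(1-t^{\alpha })t^{s}dt=\frac{\alpha }{(s+1)(\alpha +s+1)}$ and $\int_{0}^{1}(1-t^{\alpha })(1-t)^{s}dt=\frac{1}{s+1}-\frac{\Gamma (\alpha +1)\Gamma (s+1)}{\Gamma (\alpha +s+2)}$ via the Beta function. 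Your write-up is in fact slightly more careful than the paper's, since you spell out the elementary computations of both model integrals rather than quoting them.
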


\begin{proof}
From Lemma \ref{lem 2.1}, property of the modulus and using the $s-$%
convexity of $\left\vert f^{\prime }\right\vert $ we have%
\begin{eqnarray*}
&&\left\vert \frac{\left( x-a\right) ^{\alpha }f(a)+\left( b-x\right)
^{\alpha }f(b)}{b-a}-\frac{\Gamma \left( \alpha +1\right) }{b-a}\left[
J_{x^{-}}^{\alpha }f(a)+J_{x^{+}}^{\alpha }f(b)\right] \right\vert \\
&\leq &\frac{\left( x-a\right) ^{\alpha +1}}{b-a}\int_{0}^{1}\left\vert
t^{\alpha }-1\right\vert \left\vert f^{\prime }\left( tx+\left( 1-t\right)
a\right) \right\vert dt \\
&&+\frac{\left( b-x\right) ^{\alpha +1}}{b-a}\int_{0}^{1}\left\vert
1-t^{\alpha }\right\vert \left\vert f^{\prime }\left( tx+\left( 1-t\right)
b\right) \right\vert dt \\
&\leq &\frac{\left( x-a\right) ^{\alpha +1}}{b-a}\int_{0}^{1}\left(
1-t^{\alpha }\right) \left[ t^{s}\left\vert f^{\prime }(x)\right\vert
+\left( 1-t\right) ^{s}\left\vert f^{\prime }(a)\right\vert \right] dt \\
&&+\frac{\left( b-x\right) ^{\alpha +1}}{b-a}\int_{0}^{1}\left( 1-t^{\alpha
}\right) \left[ t^{s}\left\vert f^{\prime }(x)\right\vert +\left( 1-t\right)
^{s}\left\vert f^{\prime }(b)\right\vert \right] dt \\
&=&\frac{\left( x-a\right) ^{\alpha +1}}{b-a}\left\{ \int_{0}^{1}\left(
1-t^{\alpha }\right) t^{s}\left\vert f^{\prime }(x)\right\vert
dt+\int_{0}^{1}\left( 1-t^{\alpha }\right) \left( 1-t\right) ^{s}\left\vert
f^{\prime }(a)\right\vert dt\right\} \\
&&+\frac{\left( b-x\right) ^{\alpha +1}}{b-a}\left\{ \int_{0}^{1}\left(
1-t^{\alpha }\right) t^{s}\left\vert f^{\prime }(x)\right\vert
dt+\int_{0}^{1}\left( 1-t^{\alpha }\right) \left( 1-t\right) ^{s}\left\vert
f^{\prime }(b)\right\vert dt\right\} \\
&=&\frac{\alpha }{\left( s+1\right) \left( \alpha +s+1\right) }\left[ \frac{%
\left( x-a\right) ^{\alpha +1}+\left( b-x\right) ^{\alpha +1}}{b-a}\right]
\left\vert f^{\prime }(x)\right\vert \\
&&+\left[ \frac{1}{s+1}-\frac{\Gamma \left( \alpha +1\right) \Gamma \left(
s+1\right) }{\Gamma \left( \alpha +s+2\right) }\right] \left[ \frac{\left(
x-a\right) ^{\alpha +1}\left\vert f^{\prime }(a)\right\vert +\left(
b-x\right) ^{\alpha +1}\left\vert f^{\prime }(b)\right\vert }{b-a}\right] .
\end{eqnarray*}
We have used the fact that 
\begin{equation*}
\int_{0}^{1}\left( 1-t^{\alpha }\right) t^{s}dt=\frac{\alpha }{\left(
s+1\right) \left( \alpha +s+1\right) }
\end{equation*}%
and%
\begin{equation*}
\int_{0}^{1}\left( 1-t^{\alpha }\right) \left( 1-t\right) ^{s}dt=\left[ 
\frac{1}{s+1}-\frac{\Gamma \left( \alpha +1\right) \Gamma \left( s+1\right) 
}{\Gamma \left( \alpha +s+2\right) }\right]
\end{equation*}%
where $\beta $ is Euler Beta function defined by 
\begin{equation*}
\beta (x,y)=\int_{0}^{1}t^{x}\left( 1-t\right) ^{y}dt,\text{ \ \ \ }x,y>0
\end{equation*}%
and 
\begin{equation*}
\beta (x,y)=\frac{\Gamma (x)\Gamma (y)}{\Gamma (x+y)}.
\end{equation*}%
The proof is completed.
\end{proof}

\begin{remark}
\label{rem 2.1} In Theorem \ref{2.1}, if we choose $\alpha =1,$ we get the
inequality in (\ref{1.2}).
\end{remark}

\begin{theorem}
\label{teo 2.2} Let $f:I\subset \lbrack 0,\infty )\rightarrow 
\mathbb{R}
$ be a differentiable function on $I^{\circ }$ such that $f^{\prime }\in
L[a,b],$where $a,b\in I$ with $a<b.$ If $\left\vert f^{\prime }\right\vert
^{q}$ is $s-$convex on $[a,b]$ for some fixed $s\in (0,1]$, $p,q>1$, $x\in
\lbrack a,b],$ then the following inequality for fractional integrals holds:%
\begin{eqnarray*}
&&\left\vert \frac{\left( x-a\right) ^{\alpha }f(a)+\left( b-x\right)
^{\alpha }f(b)}{b-a}-\frac{\Gamma \left( \alpha +1\right) }{b-a}\left[
J_{x^{-}}^{\alpha }f(a)+J_{x^{+}}^{\alpha }f(b)\right] \right\vert \\
&\leq &\left( \frac{\Gamma \left( 1+p\right) \Gamma \left( 1+\frac{1}{\alpha 
}\right) }{\Gamma \left( 1+p+\frac{1}{\alpha }\right) }\right) ^{\frac{1}{p}%
}\left\{ \frac{\left( x-a\right) ^{\alpha +1}}{b-a}\left( \frac{\left\vert
f^{\prime }(x)\right\vert ^{q}+\left\vert f^{\prime }(a)\right\vert ^{q}}{s+1%
}\right) ^{\frac{1}{q}}\right. \\
&&\left. +\frac{\left( b-x\right) ^{\alpha +1}}{b-a}\left( \frac{\left\vert
f^{\prime }(x)\right\vert ^{q}+\left\vert f^{\prime }(b)\right\vert ^{q}}{s+1%
}\right) ^{\frac{1}{q}}\right\}
\end{eqnarray*}%
where $\frac{1}{p}+\frac{1}{q}=1,$ $\alpha >0$ and $\Gamma $ is Euler Gamma
function.
\end{theorem}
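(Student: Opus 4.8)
The plan is to follow the same route as in Theorem \ref{teo 2.1} but to replace the direct $s$-convexity estimate by an application of Hölder's inequality. Starting from the identity in Lemma \ref{lem 2.1}, denote by $M$ the absolute value appearing on the left-hand side of the statement. Since $\left\vert t^{\alpha}-1\right\vert=1-t^{\alpha}=\left\vert 1-t^{\alpha}\right\vert$ for $t\in[0,1]$ and $\alpha>0$, the triangle inequality yields
\[
M\leq \frac{(x-a)^{\alpha+1}}{b-a}\int_{0}^{1}(1-t^{\alpha})\left\vert f^{\prime}(tx+(1-t)a)\right\vert\,dt+\frac{(b-x)^{\alpha+1}}{b-a}\int_{0}^{1}(1-t^{\alpha})\left\vert f^{\prime}(tx+(1-t)b)\right\vert\,dt.
\]

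First I would apply Hölder's inequality with exponents $p$ and $q$ to each of the two integrals, factoring the integrand as $(1-t^{\alpha})\cdot\left\vert f^{\prime}(\cdot)\right\vert$. This separates each integral into $\left(\int_{0}^{1}(1-t^{\alpha})^{p}\,dt\right)^{1/p}$ times a factor $\left(\int_{0}^{1}\left\vert f^{\prime}(tx+(1-t)a)\right\vert^{q}\,dt\right)^{1/q}$ (and the analogous one with $b$). The $q$-integrals are then controlled by the $s$-convexity of $\left\vert f^{\prime}\right\vert^{q}$: using $\left\vert f^{\prime}(tx+(1-t)a)\right\vert^{q}\leq t^{s}\left\vert f^{\prime}(x)\right\vert^{q}+(1-t)^{s}\left\vert f^{\prime}(a)\right\vert^{q}$ together with $\int_{0}^{1}t^{s}\,dt=\int_{0}^{1}(1-t)^{s}\,dt=\frac{1}{s+1}$, one obtains the bound $\frac{\left\vert f^{\prime}(x)\right\vert^{q}+\left\vert f^{\prime}(a)\right\vert^{q}}{s+1}$, and similarly with $b$. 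These are exactly the bracketed factors that appear in the statement.

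The one genuine computation—and the step I expect to be the main obstacle—is evaluating the common weight factor $\int_{0}^{1}(1-t^{\alpha})^{p}\,dt$ in closed form. The substitution $u=t^{\alpha}$, so that $dt=\frac{1}{\alpha}u^{\frac{1}{\alpha}-1}\,du$, turns it into $\frac{1}{\alpha}\int_{0}^{1}u^{\frac{1}{\alpha}-1}(1-u)^{p}\,du=\frac{1}{\alpha}\beta\!\left(\frac{1}{\alpha},p+1\right)$. Invoking $\beta(x,y)=\Gamma(x)\Gamma(y)/\Gamma(x+y)$ and the identity $\frac{1}{\alpha}\Gamma\!\left(\frac{1}{\alpha}\right)=\Gamma\!\left(1+\frac{1}{\alpha}\right)$ gives precisely $\frac{\Gamma(1+p)\,\Gamma\!\left(1+\frac{1}{\alpha}\right)}{\Gamma\!\left(1+p+\frac{1}{\alpha}\right)}$. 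Raising this to the power $1/p$, factoring it out of both terms, and collecting the prefactors $\frac{(x-a)^{\alpha+1}}{b-a}$ and $\frac{(b-x)^{\alpha+1}}{b-a}$ yields the claimed inequality, completing the proof.
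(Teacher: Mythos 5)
Your proposal is correct and follows essentially the same route as the paper's own proof: Lemma \ref{lem 2.1} plus the triangle inequality, H\"{o}lder's inequality with exponents $p,q$ applied to the factorization $(1-t^{\alpha})\cdot\left\vert f^{\prime}(\cdot)\right\vert$, the $s$-convexity bound $\int_{0}^{1}\left\vert f^{\prime}\left( tx+(1-t)a\right)\right\vert^{q}dt\leq \frac{\left\vert f^{\prime}(x)\right\vert^{q}+\left\vert f^{\prime}(a)\right\vert^{q}}{s+1}$, and the Beta-function evaluation of $\int_{0}^{1}\left( 1-t^{\alpha}\right)^{p}dt$. The only difference is that you spell out, via the substitution $u=t^{\alpha}$, the computation the paper dismisses as ``simple,'' and your evaluation is correct.
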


\begin{proof}
From Lemma \ref{lem 2.1}, property of the modulus and using the H\"{o}lder
inequality we have%
\begin{eqnarray*}
&&\left\vert \frac{\left( x-a\right) ^{\alpha }f(a)+\left( b-x\right)
^{\alpha }f(b)}{b-a}-\frac{\Gamma \left( \alpha +1\right) }{b-a}\left[
J_{x^{-}}^{\alpha }f(a)+J_{x^{+}}^{\alpha }f(b)\right] \right\vert \\
&\leq &\frac{\left( x-a\right) ^{\alpha +1}}{b-a}\int_{0}^{1}\left\vert
t^{\alpha }-1\right\vert \left\vert f^{\prime }\left( tx+\left( 1-t\right)
a\right) \right\vert dt \\
&&+\frac{\left( b-x\right) ^{\alpha +1}}{b-a}\int_{0}^{1}\left\vert
1-t^{\alpha }\right\vert \left\vert f^{\prime }\left( tx+\left( 1-t\right)
b\right) \right\vert dt \\
&\leq &\frac{\left( x-a\right) ^{\alpha +1}}{b-a}\left\{ \left(
\int_{0}^{1}\left( 1-t^{\alpha }\right) ^{p}dt\right) ^{\frac{1}{p}}\left(
\int_{0}^{1}\left\vert f^{\prime }\left( tx+\left( 1-t\right) a\right)
\right\vert ^{q}dt\right) ^{\frac{1}{q}}\right\} \\
&&+\frac{\left( b-x\right) ^{\alpha +1}}{b-a}\left\{ \left(
\int_{0}^{1}\left( 1-t^{\alpha }\right) ^{p}dt\right) ^{\frac{1}{p}}\left(
\int_{0}^{1}\left\vert f^{\prime }\left( tx+\left( 1-t\right) b\right)
\right\vert ^{q}dt\right) ^{\frac{1}{q}}\right\} .
\end{eqnarray*}%
Since $\left\vert f^{\prime }\right\vert ^{q}$ is $s-$convex on $[a,b],$ we
get 
\begin{equation*}
\int_{0}^{1}\left\vert f^{\prime }\left( tx+\left( 1-t\right) a\right)
\right\vert ^{q}dt\leq \frac{\left\vert f^{\prime }(x)\right\vert
^{q}+\left\vert f^{\prime }(a)\right\vert ^{q}}{s+1},
\end{equation*}%
\begin{equation*}
\int_{0}^{1}\left\vert f^{\prime }\left( tx+\left( 1-t\right) b\right)
\right\vert ^{q}dt\leq \frac{\left\vert f^{\prime }(x)\right\vert
^{q}+\left\vert f^{\prime }(b)\right\vert ^{q}}{s+1}
\end{equation*}%
and by simple computation 
\begin{equation*}
\int_{0}^{1}\left( 1-t^{\alpha }\right) ^{p}dt=\frac{\Gamma \left(
1+p\right) \Gamma \left( 1+\frac{1}{\alpha }\right) }{\Gamma \left( 1+p+%
\frac{1}{\alpha }\right) }.
\end{equation*}%
Hence we have%
\begin{eqnarray*}
&&\left\vert \frac{\left( x-a\right) ^{\alpha }f(a)+\left( b-x\right)
^{\alpha }f(b)}{b-a}-\frac{\Gamma \left( \alpha +1\right) }{b-a}\left[
J_{x^{-}}^{\alpha }f(a)+J_{x^{+}}^{\alpha }f(b)\right] \right\vert \\
&\leq &\frac{\left( x-a\right) ^{\alpha +1}}{b-a}\left( \frac{\Gamma \left(
1+p\right) \Gamma \left( 1+\frac{1}{\alpha }\right) }{\Gamma \left( 1+p+%
\frac{1}{\alpha }\right) }\right) ^{\frac{1}{p}}\left( \frac{\left\vert
f^{\prime }(x)\right\vert ^{q}+\left\vert f^{\prime }(a)\right\vert ^{q}}{s+1%
}\right) ^{\frac{1}{q}} \\
&&+\frac{\left( b-x\right) ^{\alpha +1}}{b-a}\left( \frac{\Gamma \left(
1+p\right) \Gamma \left( 1+\frac{1}{\alpha }\right) }{\Gamma \left( 1+p+%
\frac{1}{\alpha }\right) }\right) ^{\frac{1}{p}}\left( \frac{\left\vert
f^{\prime }(x)\right\vert ^{q}+\left\vert f^{\prime }(b)\right\vert ^{q}}{s+1%
}\right) ^{\frac{1}{q}}
\end{eqnarray*}%
which completes the proof.
\end{proof}

\begin{remark}
\label{rem 2.2} In Theorem \ref{2.2}, if we choose $\alpha =1,$ we get the
inequality in (\ref{1.3}).
\end{remark}

\begin{theorem}
\label{teo 2.3} Let $f:I\subset \lbrack 0,\infty )\rightarrow 
\mathbb{R}
$ be a differentiable function on $I^{\circ }$ such that $f^{\prime }\in
L[a,b],$where $a,b\in I$ with $a<b.$ If $\left\vert f^{\prime }\right\vert
^{q}$ is $s-$convex on $[a,b]$ for some fixed $s\in (0,1]$, $q\geq 1$, $x\in
\lbrack a,b],$ then the following inequality for fractional integrals holds:%
\begin{eqnarray*}
&&\left\vert \frac{\left( x-a\right) ^{\alpha }f(a)+\left( b-x\right)
^{\alpha }f(b)}{b-a}-\frac{\Gamma \left( \alpha +1\right) }{b-a}\left[
J_{x^{-}}^{\alpha }f(a)+J_{x^{+}}^{\alpha }f(b)\right] \right\vert \\
&\leq &\left( \frac{\alpha }{\alpha +1}\right) ^{1-\frac{1}{q}} \\
&&\times \left\{ \frac{\left( x-a\right) ^{\alpha +1}}{b-a}\left( \frac{%
\alpha }{\left( s+1\right) \left( \alpha +s+1\right) }\left\vert f^{\prime
}(x)\right\vert ^{q}+\left[ \frac{1}{s+1}-\frac{\Gamma \left( \alpha
+1\right) \Gamma \left( s+1\right) }{\Gamma \left( \alpha +s+2\right) }%
\right] \left\vert f^{\prime }(a)\right\vert ^{q}\right) ^{\frac{1}{q}%
}\right. \\
&&\left. +\frac{\left( b-x\right) ^{\alpha +1}}{b-a}\left( \frac{\alpha }{%
\left( s+1\right) \left( \alpha +s+1\right) }\left\vert f^{\prime
}(x)\right\vert ^{q}+\left[ \frac{1}{s+1}-\frac{\Gamma \left( \alpha
+1\right) \Gamma \left( s+1\right) }{\Gamma \left( \alpha +s+2\right) }%
\right] \left\vert f^{\prime }(b)\right\vert ^{q}\right) ^{\frac{1}{q}%
}\right\}
\end{eqnarray*}%
where $\alpha >0$ and $\Gamma $ is Euler Gamma function.
\end{theorem}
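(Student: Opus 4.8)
The plan is to open exactly as in the proofs of Theorems 2.1 and 2.2: apply Lemma \ref{lem 2.1}, pass to the modulus, and use that $\left\vert t^{\alpha }-1\right\vert =1-t^{\alpha }$ for $t\in \lbrack 0,1]$ and $\alpha >0$. This bounds the left-hand side by
\[
\frac{\left( x-a\right) ^{\alpha +1}}{b-a}\int_{0}^{1}\left( 1-t^{\alpha }\right) \left\vert f^{\prime }\left( tx+\left( 1-t\right) a\right) \right\vert dt+\frac{\left( b-x\right) ^{\alpha +1}}{b-a}\int_{0}^{1}\left( 1-t^{\alpha }\right) \left\vert f^{\prime }\left( tx+\left( 1-t\right) b\right) \right\vert dt.
\]

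Since here $q\geq 1$ (rather than $q>1$ as in Theorem \ref{teo 2.2}), the natural tool is the power-mean inequality in place of H\"{o}lder's inequality. Writing the weight as $\left( 1-t^{\alpha }\right) =\left( 1-t^{\alpha }\right) ^{1-1/q}\left( 1-t^{\alpha }\right) ^{1/q}$, I would apply the power-mean inequality to each integral in the form
\[
\int_{0}^{1}\left( 1-t^{\alpha }\right) \left\vert f^{\prime }\right\vert dt\leq \left( \int_{0}^{1}\left( 1-t^{\alpha }\right) dt\right) ^{1-\frac{1}{q}}\left( \int_{0}^{1}\left( 1-t^{\alpha }\right) \left\vert f^{\prime }\right\vert ^{q}dt\right) ^{\frac{1}{q}}.
\]
The first factor is elementary: $\int_{0}^{1}\left( 1-t^{\alpha }\right) dt=\frac{\alpha }{\alpha +1}$, which produces the leading constant $\left( \frac{\alpha }{\alpha +1}\right) ^{1-\frac{1}{q}}$.

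For the second factor I would invoke the $s-$convexity of $\left\vert f^{\prime }\right\vert ^{q}$, giving $\left\vert f^{\prime }\left( tx+\left( 1-t\right) a\right) \right\vert ^{q}\leq t^{s}\left\vert f^{\prime }(x)\right\vert ^{q}+\left( 1-t\right) ^{s}\left\vert f^{\prime }(a)\right\vert ^{q}$ and analogously with $b$ in place of $a$. Integrating term by term against the weight $\left( 1-t^{\alpha }\right) $ then calls for exactly the two auxiliary integrals already evaluated in the proof of Theorem \ref{teo 2.1}, namely $\int_{0}^{1}\left( 1-t^{\alpha }\right) t^{s}dt=\frac{\alpha }{\left( s+1\right) \left( \alpha +s+1\right) }$ and $\int_{0}^{1}\left( 1-t^{\alpha }\right) \left( 1-t\right) ^{s}dt=\frac{1}{s+1}-\frac{\Gamma \left( \alpha +1\right) \Gamma \left( s+1\right) }{\Gamma \left( \alpha +s+2\right) }$, the latter through the Euler Beta function. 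Assembling the two summands then yields the claimed inequality. I do not anticipate a genuine obstacle: the argument is structurally identical to that of Theorem \ref{teo 2.2}, the only real decision being to keep the weight $\left( 1-t^{\alpha }\right) $ present in \emph{both} factors via the power-mean inequality rather than isolating it entirely into one factor via H\"{o}lder. Consequently no new integral computations beyond those already performed in the paper are required.
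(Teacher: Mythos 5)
Your proposal is correct and follows essentially the same route as the paper's own proof: after applying Lemma \ref{lem 2.1} and taking moduli, the paper likewise applies the power-mean inequality keeping the weight $\left(1-t^{\alpha}\right)$ in both factors, then invokes the $s$-convexity of $\left\vert f^{\prime}\right\vert^{q}$ together with the integrals $\int_{0}^{1}\left(1-t^{\alpha}\right)dt=\frac{\alpha}{\alpha+1}$, $\int_{0}^{1}\left(1-t^{\alpha}\right)t^{s}dt=\frac{\alpha}{\left(s+1\right)\left(\alpha+s+1\right)}$ and $\int_{0}^{1}\left(1-t^{\alpha}\right)\left(1-t\right)^{s}dt=\frac{1}{s+1}-\frac{\Gamma\left(\alpha+1\right)\Gamma\left(s+1\right)}{\Gamma\left(\alpha+s+2\right)}$. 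No gaps; your write-up is, if anything, cleaner than the paper's (which contains a harmless typo, writing $\left\vert f^{\prime}(a)\right\vert^{q}$ where $\left\vert f^{\prime}(b)\right\vert^{q}$ is meant in its second auxiliary estimate).
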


\begin{proof}
From Lemma \ref{lem 2.1}, property of the modulus and using the power-mean
inequality we have%
\begin{eqnarray}
&&  \label{2.3} \\
&&\left\vert \frac{\left( x-a\right) ^{\alpha }f(a)+\left( b-x\right)
^{\alpha }f(b)}{b-a}-\frac{\Gamma \left( \alpha +1\right) }{b-a}\left[
J_{x^{-}}^{\alpha }f(a)+J_{x^{+}}^{\alpha }f(b)\right] \right\vert  \notag \\
&\leq &\frac{\left( x-a\right) ^{\alpha +1}}{b-a}\int_{0}^{1}\left\vert
t^{\alpha }-1\right\vert \left\vert f^{\prime }\left( tx+\left( 1-t\right)
a\right) \right\vert dt+\frac{\left( b-x\right) ^{\alpha +1}}{b-a}%
\int_{0}^{1}\left\vert 1-t^{\alpha }\right\vert \left\vert f^{\prime }\left(
tx+\left( 1-t\right) b\right) \right\vert dt  \notag \\
&\leq &\frac{\left( x-a\right) ^{\alpha +1}}{b-a}\left\{ \left(
\int_{0}^{1}\left( 1-t^{\alpha }\right) dt\right) ^{1-\frac{1}{q}}\left(
\int_{0}^{1}\left( 1-t^{\alpha }\right) \left\vert f^{\prime }\left(
tx+\left( 1-t\right) a\right) \right\vert ^{q}dt\right) ^{\frac{1}{q}%
}\right\}  \notag \\
&&+\frac{\left( b-x\right) ^{\alpha +1}}{b-a}\left\{ \left(
\int_{0}^{1}\left( 1-t^{\alpha }\right) dt\right) ^{1-\frac{1}{q}}\left(
\int_{0}^{1}\left( 1-t^{\alpha }\right) \left\vert f^{\prime }\left(
tx+\left( 1-t\right) b\right) \right\vert ^{q}dt\right) ^{\frac{1}{q}%
}\right\} .  \notag
\end{eqnarray}%
Since $\left\vert f^{\prime }\right\vert ^{q}$ is $s-$convex on $[a,b],$ we
get%
\begin{eqnarray}
&&  \label{2.4} \\
\int_{0}^{1}\left( 1-t^{\alpha }\right) \left\vert f^{\prime }\left(
tx+\left( 1-t\right) a\right) \right\vert ^{q}dt &\leq &\int_{0}^{1}\left(
1-t^{\alpha }\right) \left[ t^{s}\left\vert f^{\prime }(x)\right\vert
^{q}+\left( 1-t\right) ^{s}\left\vert f^{\prime }(a)\right\vert ^{q}\right] 
\notag \\
&=&\frac{\alpha }{\left( s+1\right) \left( \alpha +s+1\right) }\left\vert
f^{\prime }(x)\right\vert ^{q}+\left[ \frac{1}{s+1}-\frac{\Gamma \left(
\alpha +1\right) \Gamma \left( s+1\right) }{\Gamma \left( \alpha +s+2\right) 
}\right] \left\vert f^{\prime }(a)\right\vert ^{q}  \notag
\end{eqnarray}%
and 
\begin{eqnarray}
&&  \label{2.5} \\
\int_{0}^{1}\left( 1-t^{\alpha }\right) \left\vert f^{\prime }\left(
tx+\left( 1-t\right) b\right) \right\vert ^{q}dt &\leq &\int_{0}^{1}\left(
1-t^{\alpha }\right) \left[ t^{s}\left\vert f^{\prime }(x)\right\vert
^{q}+\left( 1-t\right) ^{s}\left\vert f^{\prime }(b)\right\vert ^{q}\right] 
\notag \\
&=&\frac{\alpha }{\left( s+1\right) \left( \alpha +s+1\right) }\left\vert
f^{\prime }(x)\right\vert ^{q}+\left[ \frac{1}{s+1}-\frac{\Gamma \left(
\alpha +1\right) \Gamma \left( s+1\right) }{\Gamma \left( \alpha +s+2\right) 
}\right] \left\vert f^{\prime }(a)\right\vert ^{q}.  \notag
\end{eqnarray}%
If we use (\ref{2.4}) and (\ref{2.5}) in (\ref{2.3}), we obtain the desired
result.
\end{proof}

\begin{remark}
\label{rem 2.3} In Theorem \ref{2.3}, if we choose $\alpha =1,$ we get the
inequality in (\ref{1.4}).
\end{remark}

\begin{theorem}
\label{teo 2.4} Let $f:I\subset \lbrack 0,\infty )\rightarrow 
\mathbb{R}
$ be a differentiable function on $I^{\circ }$ such that $f^{\prime }\in
L[a,b],$where $a,b\in I$ with $a<b.$ If $\left\vert f^{\prime }\right\vert
^{q}$ is $s-$concave on $[a,b]$ for some fixed $s\in (0,1]$, $q>1,$ $x\in
\lbrack a,b],$ then the following inequality for fractional integrals holds:%
\begin{eqnarray*}
&&\left\vert \frac{\left( x-a\right) ^{\alpha }f(a)+\left( b-x\right)
^{\alpha }f(b)}{b-a}-\frac{\Gamma \left( \alpha +1\right) }{b-a}\left[
J_{x^{-}}^{\alpha }f(a)+J_{x^{+}}^{\alpha }f(b)\right] \right\vert \\
&\leq &\left( \frac{\Gamma \left( 1+p\right) \Gamma \left( 1+\frac{1}{\alpha 
}\right) }{\Gamma \left( 1+p+\frac{1}{\alpha }\right) }\right) ^{\frac{1}{p}}%
\frac{2^{\frac{s-1}{q}}}{b-a}\left\{ \left( x-a\right) ^{\alpha
+1}\left\vert f^{\prime }\left( \frac{x+a}{2}\right) \right\vert +\left(
b-x\right) ^{\alpha +1}\left\vert f^{\prime }\left( \frac{x+b}{2}\right)
\right\vert \right\}
\end{eqnarray*}%
where $\frac{1}{p}+\frac{1}{q}=1,$ $\alpha >0$ and $\Gamma $ is Euler Gamma
function.
\end{theorem}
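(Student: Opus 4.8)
The plan is to follow the same two moves as in the proof of Theorem~\ref{teo 2.2} --- the identity of Lemma~\ref{lem 2.1} followed by H\"{o}lder's inequality --- but to replace the $s$-convexity estimate of the $q$-th power integrals by the Hermite--Hadamard bound available for $s$-concave functions. First I would apply Lemma~\ref{lem 2.1} together with the triangle inequality for integrals, using that $\left\vert t^{\alpha }-1\right\vert =\left\vert 1-t^{\alpha }\right\vert =1-t^{\alpha }$ on $[0,1]$, to reduce the left-hand side to the two terms $\frac{(x-a)^{\alpha +1}}{b-a}\int_{0}^{1}(1-t^{\alpha })\left\vert f^{\prime }(tx+(1-t)a)\right\vert \,dt$ and $\frac{(b-x)^{\alpha +1}}{b-a}\int_{0}^{1}(1-t^{\alpha })\left\vert f^{\prime }(tx+(1-t)b)\right\vert \,dt$. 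To each of these I would apply H\"{o}lder's inequality with conjugate exponents $p,q$, factoring out $\bigl(\int_{0}^{1}(1-t^{\alpha })^{p}\,dt\bigr)^{1/p}$ from both terms.

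The first auxiliary computation is exactly the one already recorded in the proof of Theorem~\ref{teo 2.2}: the substitution $u=t^{\alpha }$ turns the power integral into a Beta integral, giving
\begin{equation*}
\int_{0}^{1}\left( 1-t^{\alpha }\right) ^{p}\,dt=\frac{\Gamma \left( 1+p\right) \Gamma \left( 1+\frac{1}{\alpha }\right) }{\Gamma \left( 1+p+\frac{1}{\alpha }\right) },
\end{equation*}
which produces the factor $\bigl(\frac{\Gamma (1+p)\Gamma (1+1/\alpha )}{\Gamma (1+p+1/\alpha )}\bigr)^{1/p}$ standing in front of the asserted bound.

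The decisive step, and the one I expect to demand the most care, is the estimation of $\bigl(\int_{0}^{1}\left\vert f^{\prime }(tx+(1-t)a)\right\vert ^{q}\,dt\bigr)^{1/q}$ and its companion using $s$-concavity rather than $s$-convexity. Here I would first perform the change of variable $u=tx+(1-t)a$, which yields $\int_{0}^{1}\left\vert f^{\prime }(tx+(1-t)a)\right\vert ^{q}\,dt=\frac{1}{x-a}\int_{a}^{x}\left\vert f^{\prime }(u)\right\vert ^{q}\,du$. Since $\left\vert f^{\prime }\right\vert ^{q}$ is $s$-concave on $[a,b]$, the Hermite--Hadamard inequality for $s$-concave functions in the second sense --- the reversal of the left inequality in (\ref{1.1}), obtained by writing $\frac{a+x}{2}=\frac{1}{2}\bigl(ta+(1-t)x\bigr)+\frac{1}{2}\bigl((1-t)a+tx\bigr)$ and integrating the concavity estimate over $t\in [0,1]$ --- gives $\frac{1}{x-a}\int_{a}^{x}\left\vert f^{\prime }(u)\right\vert ^{q}\,du\leq 2^{s-1}\left\vert f^{\prime }\left( \frac{x+a}{2}\right) \right\vert ^{q}$, and similarly with $a$ replaced by $b$. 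Taking $q$-th roots converts the constant $2^{s-1}$ into the factor $2^{(s-1)/q}$ and replaces each integral by the value of $\left\vert f^{\prime }\right\vert $ at the appropriate midpoint. The main obstacle is precisely this reversed Hermite--Hadamard estimate: one must invoke the $s$-concave analogue in the correct direction (upper bound by the midpoint value) and verify that $s\in (0,1]$ legitimizes the argument.

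Finally I would reinsert these evaluations into the two H\"{o}lder-estimated terms and collect the common constants. The factor $\bigl(\frac{\Gamma (1+p)\Gamma (1+1/\alpha )}{\Gamma (1+p+1/\alpha )}\bigr)^{1/p}$ together with $\frac{2^{(s-1)/q}}{b-a}$ pull out of the braces, leaving $(x-a)^{\alpha +1}\left\vert f^{\prime }(\frac{x+a}{2})\right\vert +(b-x)^{\alpha +1}\left\vert f^{\prime }(\frac{x+b}{2})\right\vert $ inside, which is exactly the claimed inequality. Specializing to $\alpha =1$ should recover Theorem~\ref{teo 1.6}, which serves as a consistency check on the constants.
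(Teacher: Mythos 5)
Your proposal is correct and follows the paper's own proof essentially step for step: Lemma~\ref{lem 2.1} plus the triangle inequality, H\"{o}lder's inequality with the Beta-integral evaluation of $\int_{0}^{1}(1-t^{\alpha })^{p}\,dt$, and the reversed Hermite--Hadamard inequality (\ref{1.1}) for the $s$-concave function $\left\vert f^{\prime }\right\vert ^{q}$ to produce the factor $2^{\frac{s-1}{q}}$ and the midpoint values. The only difference is that you spell out the change of variables and the derivation of the reversed midpoint bound, which the paper simply cites via (\ref{1.1}).
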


\begin{proof}
From Lemma \ref{lem 2.1}, property of the modulus and using the H\"{o}lder
inequality we have%
\begin{eqnarray}
&&  \label{2.6} \\
&&\left\vert \frac{\left( x-a\right) ^{\alpha }f(a)+\left( b-x\right)
^{\alpha }f(b)}{b-a}-\frac{\Gamma \left( \alpha +1\right) }{b-a}\left[
J_{x^{-}}^{\alpha }f(a)+J_{x^{+}}^{\alpha }f(b)\right] \right\vert  \notag \\
&\leq &\frac{\left( x-a\right) ^{\alpha +1}}{b-a}\int_{0}^{1}\left\vert
t^{\alpha }-1\right\vert \left\vert f^{\prime }\left( tx+\left( 1-t\right)
a\right) \right\vert dt  \notag \\
&&+\frac{\left( b-x\right) ^{\alpha +1}}{b-a}\int_{0}^{1}\left\vert
1-t^{\alpha }\right\vert \left\vert f^{\prime }\left( tx+\left( 1-t\right)
b\right) \right\vert dt  \notag \\
&\leq &\frac{\left( x-a\right) ^{\alpha +1}}{b-a}\left\{ \left(
\int_{0}^{1}\left( 1-t^{\alpha }\right) ^{p}dt\right) ^{\frac{1}{p}}\left(
\int_{0}^{1}\left\vert f^{\prime }\left( tx+\left( 1-t\right) a\right)
\right\vert ^{q}dt\right) ^{\frac{1}{q}}\right\}  \notag \\
&&+\frac{\left( b-x\right) ^{\alpha +1}}{b-a}\left\{ \left(
\int_{0}^{1}\left( 1-t^{\alpha }\right) ^{p}dt\right) ^{\frac{1}{p}}\left(
\int_{0}^{1}\left\vert f^{\prime }\left( tx+\left( 1-t\right) b\right)
\right\vert ^{q}dt\right) ^{\frac{1}{q}}\right\} .  \notag
\end{eqnarray}%
Since $\left\vert f^{\prime }\right\vert ^{q}$ is $s-$concave on $[a,b],$
using the inequality (\ref{1.1}), we have%
\begin{equation}
\int_{0}^{1}\left\vert f^{\prime }\left( tx+\left( 1-t\right) a\right)
\right\vert ^{q}dt\leq 2^{s-1}\left\vert f^{\prime }\left( \frac{x+a}{2}%
\right) \right\vert ^{q}  \label{2.7}
\end{equation}%
and%
\begin{equation}
\int_{0}^{1}\left\vert f^{\prime }\left( tx+\left( 1-t\right) b\right)
\right\vert ^{q}dt\leq 2^{s-1}\left\vert f^{\prime }\left( \frac{x+b}{2}%
\right) \right\vert ^{q}.  \label{2.9}
\end{equation}%
From (\ref{2.6})-(\ref{2.9}), we complete the proof.
\end{proof}

\begin{remark}
\label{rem 2.4} In Theorem \ref{2.4}, if we choose $\alpha =1,$ we get the
inequality in (\ref{1.5}).
\end{remark}

\end{document}